\newtheorem{theorem}{Theorem}[section]
\newtheorem{lemma}[theorem]{Lemma}
\newtheorem{corollary}[theorem]{Corollary}
\theoremstyle{definition}
\newtheorem{example}[theorem]{Example}
\theoremstyle{remark}
\newtheorem{remark}[theorem]{Remark}
\numberwithin{equation}{section}
\begin{document}
\setcounter{page}{1}


\title [Obtaining Leibniz's rule for derivations in its most general form] {Obtaining Leibniz's rule for derivations in its most general form}
\author[Amin Hosseini]{Amin Hosseini}
\address{Amin Hosseini, Department of Mathematics, Kashmar Higher Education Institute, Kashmar, Iran}
\email{\textcolor[rgb]{0.00,0.00,0.84}{a.hosseini@mshdiau.ac.ir}}


\subjclass[2010]{Primary: 47B47; Secondary: 11B65.}

\keywords{Leibniz's rule, Newton's binomial formula, derivation, ternary derivation, generalized $(\sigma, \tau)$-derivation, $(\delta, \varepsilon)$-double derivation.}

\date{Received: xxxxxx; Revised: yyyyyy; Accepted: zzzzzz.
\newline \indent $^{*}$ Corresponding author}

\begin{abstract}
The main purpose of this paper is to obtain Leibniz's rule for generalized types of derivations via Newton's binomial formula. In fact, we provide a short formula to calculate the nth power of any kind of derivations.
\end{abstract} \maketitle


\section{  the connection between Leibniz's rule and Newton's binomial formula}
The purpose of this section is to obtain Leibniz's rule for generalized types of derivations via Newton's binomial formula. Throughout the article, $\mathcal{A}$ denotes an algebra, $I$ is the identity mapping and $\textbf{1}$ stands for the identity element of any unital algebra.
\\
\\
Let $d: \mathcal{A} \rightarrow \mathcal{A}$ be an additive derivation and let $n$ be a positive integer. According to Part (i) of \cite[Theorem 1.8.5]{D}, Leibniz's rule or Leibnz's identity is as follows:
\begin{align}
d^{n}(ab) = \sum_{k = 0}^{n}\Big(_{k}^{n}\Big)d^{n - k}(a)d^{k}(b), \ \ \ \ \ \ (a, b \in \mathcal{A}),
\end{align}
where $d^{0} = I$. Suppose $\mathcal{A}$ is unital and $x, y$ are two arbitrary elements of $\mathcal{A}$ such that $xy = yx$. According to Newton's binomial formula, we have
\begin{align}
(x + y)^{n} = \sum_{k = 0}^{n}\Big(_{k}^{n}\Big)x^{n - k}y^{k},
\end{align}
where $a^{0} = \textbf{1}$ for any $a \in \mathcal{A}$. If we look carefully at the above two formulas, we will notice the similarity between them. This similarity is the basis of our idea to derive Leibniz's rule for generalized types of derivations. First, we explain our idea of calculating Leibniz's rule for ordinary derivations. Let $a$ and $b$ be two arbitrary elements of $\mathcal{A}$. We have $d(ab) = d(a) b+ a d(b)$. Letting $\mathfrak{X} = d(a)b$ and $\mathfrak{Y} = a d(b)$, we see that
\begin{align*}
d(ab) = \mathfrak{X} + \mathfrak{Y}.
\end{align*}
Now, we define the operation $\otimes$ between $\mathfrak{X}$ and $\mathfrak{Y}$ as follows:
\begin{align*}
\mathfrak{X} \otimes \mathfrak{Y} = (d(a)b) \otimes (a d(b)) = d(a) d(b).
\end{align*}
Clearly, we have
\begin{align*}
\mathfrak{Y} \otimes \mathfrak{X} = (ad(b)) \otimes (d(a)b) = d(a) d(b).
\end{align*}
As can be seen, $\mathfrak{X} \otimes \mathfrak{Y} = \mathfrak{Y} \otimes \mathfrak{X}$. Moreover, have
\begin{align*}
& \mathfrak{X}^{2} = \mathfrak{X} \otimes \mathfrak{X} = (d(a)b) \otimes (d(a)b) = d^{2}(a) b, \\ &  \mathfrak{Y}^{2} = \mathfrak{Y} \otimes \mathfrak{Y} = (a d(b)) \otimes (a d(b)) = a d^2(b).
\end{align*}
Hence, for any positive integer $n$, we have the following expressions:
\begin{align*}
& \mathfrak{X}^{n} = d^{n}(a) b, \\ &  \mathfrak{Y}^{n} = a d^n(b).
\end{align*}
Also, we define $\mathfrak{X}^{0} = \mathfrak{Y}^{0} = ab$. Considering the above equations and the fact that $\mathfrak{X} \otimes \mathfrak{Y} = \mathfrak{Y} \otimes \mathfrak{X}$ and also using Newton binomial formula, we get that
\begin{align*}
(\mathfrak{X} + \mathfrak{Y})^{n} & = \sum_{k = 0}^{n}\Big(_{k}^{n}\Big)\mathfrak{X}^{n - k} \otimes \mathfrak{Y}^{k} \\& = \sum_{k = 0}^{n}\Big(_{k}^{n}\Big)(d^{n - k}(a)b)\otimes (a d^{k}(b)) \\ & = \sum_{k = 0}^{n}\Big(_{k}^{n}\Big)d^{n - k}(a)d^{k}(b) \\ & = d^{n}(ab),
\end{align*}
which means that
\begin{align*}
d^{n}(ab) = (\mathfrak{X} + \mathfrak{Y})^{n}.
\end{align*}
The above discussion shows the connection between Leibniz's rule and Newton's binomial formula. \\

\section{calculation of Leibniz's rule via commutative Newton's binomial formula}

In this section, we are going to calculate Leibniz's rule for generalized types of derivations via Newton's binomial formula. So far, various generalizations for derivations have been defined. One of them is $(\sigma, \tau$)-derivation. Let $\sigma, \tau:\mathcal{A} \rightarrow \mathcal{A}$ be two mappings. An additive mapping $d: \mathcal{A} \rightarrow \mathcal{A}$ is called a $(\sigma, \tau$)-derivation if $$d(ab) = d(a) \sigma(b) + \tau(a) d(b)$$ holds for all $a, b \in \mathcal{A}$. By a $\sigma$-derivation we mean a $(\sigma, \sigma$)-derivation. Clearly, if $\sigma = I$, then we reach an ordinary derivation on $\mathcal{A}$. Also, an additive mapping $\Delta: \mathcal{A} \rightarrow \mathcal{A}$ is called a generalized $(\sigma, \tau$)-derivation corresponding to an additive mapping $d: \mathcal{A} \rightarrow \mathcal{A}$ if
$$ \Delta(ab) = \Delta(a) \sigma(b) + \tau(a) d(b)$$
holds for all $a, b \in \mathcal{A}$. By a generalized $\sigma$-derivation we mean a generalized $(\sigma, \sigma$)-derivation. In particular, if $\sigma = I$, we reach a generalized derivation. Now, we consider a generalized form of derivations covering the above-mentioned notions. Let $\mathcal{A}$ be an algebra, let $f, g_1, g_2, h_1, h_2:\mathcal{A} \rightarrow \mathcal{A}$ be mappings such that $f$ is additive. Suppose these mappings satisfy the following equation:
$$ f(ab) = g_1(a) h_1(b) + g_2(a) h_2(b)$$ for all $a, b \in \mathcal{A}$. In this case, the additive mapping $f$ is called a $(g_1, h_1, g_2, h_2)$-derivation. As can be seen, this type of derivation covers the notion of derivation, generalized derivation, $(\sigma, \tau)$-derivation, generalized $(\sigma, \tau)$-derivation, ternary derivation, homomorphism, left and right centralizer. Our main goal in this section is to obtain Leibniz's rule for this generalized type of derivations.
\\
\\
Let $T$ and $S$ be two mappings from $\mathcal{A}$ into itself and let $a$ be an arbitrary element of $\mathcal{A}$. We define an operation $\otimes$ between $ T(a)$ and $ S(a)$ as follows: $$  T(a) \otimes S(a)=  TS(a).$$
It is clear that if $\alpha$ and $\beta$ are two arbitrary complex numbers, then
$$ \left(\alpha T(a)\right) \otimes \left(\beta S(a)\right) = \alpha\beta TS(a).$$
Let $m$ be a positive integer and let $T_1, ..., T_m$ and $S_1, ..., S_m$ be mappings of an algebra $\mathcal{A}$. Let $a_1, ..., a_m$ be arbitrary elements of $\mathcal{A}$. We define the operation $\otimes$ between $T_1(a_1) ... T_m(a_m)$ and $S_1(a_1) ... S_m(a_m)$ as follows:
\begin{align}
(T_1(a_1) ... T_m(a_m)) \otimes (S_1(a_1) ... S_m(a_m)) = \prod_{i = i}^{m}T_i S_i(a_i)
\end{align}
Let $a,b$ be two arbitrary elements of $\mathcal{A}$ and let $\mathfrak{X}_i = T_i(a)S_i(b)$, where $i \in \{1, 2, 3\}$. We assume that
\begin{align}
& \left(\mathfrak{X}_1 + \mathfrak{X}_2\right)\otimes \mathfrak{X}_3 = \mathfrak{X}_1 \otimes \mathfrak{X}_3 + \mathfrak{X}_2 \otimes \mathfrak{X}_3, \\ & \mathfrak{X}_1 \otimes (\mathfrak{X}_2 + \mathfrak{X}_3) = \mathfrak{X}_1 \otimes \mathfrak{X}_2 + \mathfrak{X}_1 \otimes \mathfrak{X}_3.
\end{align}
Also, notice that
\begin{align}
\mathfrak{X}_i \otimes (\mathfrak{X}_j \otimes \mathfrak{X}_k) = (\mathfrak{X}_i \otimes \mathfrak{X}_j) \otimes \mathfrak{X}_k, \ \ \ \ \ \ \ \ i, j, k \in \{1, 2, 3\}.
\end{align}
Suppose that $T_{i,j}$ and $S_{i,j}$ are mappings from $\mathcal{A}$ into itself, where $i \in \{1, ..., m\}$ and $j \in \{1, ..., n\}$. It is easy to check that the following equation holds:
\begin{align}
\left(\sum_{j = 1}^{n}\prod_{i = 1}^{m}T_{i, j}(a_i)\right) \otimes \left(\sum_{j = 1}^{n}\prod_{i = 1}^{m}S_{i, j}(a_i)\right) = \sum_{r = 1}^{n} \sum_{j = 1}^{k}\left(\prod_{i = 1}^{m}T_{i, r}S_{i, j}(a_i)\right)
\end{align}

The identity element with respect to the operation $\otimes$ is $I(a)I(b) = ab$ which is denoted by $\textbf{1}_{(a,b)}^{\otimes}$, i.e. $\textbf{1}_{(a,b)}^{\otimes} = ab.$
\\
\\
Now we are going to obtain Leibniz's rule for $(\sigma, \tau$)-derivations using the same method as we calculated Leibniz's rule for derivation. Recall that the commutator of the mappings $S$ and $T$ is $[T, S] = TS - ST$. \\

Let $d:\mathcal{A} \rightarrow \mathcal{A}$ be a $(\sigma, \tau$)-derivation such that $[\tau, d] = [\sigma, d] =0$. Let $a$ and $b$ be two arbitrary elements of $\mathcal{A}$. We have
$$d(ab) = d(a) \sigma(b) + \tau(a) d(b)$$
Putting $\mathfrak{X} = d(a)\sigma(b)$ and $\mathfrak{Y} = \tau(a)d(b)$, we have $d(ab) = \mathfrak{X} + \mathfrak{Y}$.  In view of (2.1), we consider the operation $\otimes$ between $\mathfrak{X}$ and $\mathfrak{Y}$ as follows:
$$\mathfrak{X} \otimes \mathfrak{Y} = (d(a)\sigma(b)) \otimes (\tau(a)d(b)) = d \tau (a) \sigma d(b).$$
Since we are assuming that $d \sigma = \sigma d$ and $d \tau = \tau d$, it is observed that $\mathfrak{X} \otimes \mathfrak{Y} = \mathfrak{Y} \otimes \mathfrak{X}$ and also $$\mathfrak{X}^{n} = \underbrace{\mathfrak{X} \otimes ... \otimes \mathfrak{X}}_{n-times} = \underbrace{d(a)\sigma(b) \otimes ... \otimes d(a)\sigma(b)}_{n-times} = d^n(a)\sigma^n(b)$$ and $$\mathfrak{Y}^{n} = \underbrace{\mathfrak{Y} \otimes ... \otimes \mathfrak{Y}}_{n-times} = \tau^n(a)d^n(b)$$ for all $n \in \mathbb{N}$. According to Newton's binomial formula, we have $$(\mathfrak{X} + \mathfrak{Y})^n = \sum_{k = 0}^{n}\Big(_{k}^{n}\Big)\mathfrak{X}^{n - k} \otimes \mathfrak{Y}^{k}$$ for all $n \in \mathbb{N},$ where $\mathfrak{X}^{0} = \mathfrak{Y}^{0} = \textbf{1}_{(a,b)}^{\otimes} = ab$. Also note that
\begin{align*}
(\mathfrak{X} + \mathfrak{Y})^n & = \sum_{k = 0}^{n}\Big(_{k}^{n}\Big)\mathfrak{X}^{n - k} \otimes \mathfrak{Y}^{k} \\ & = \sum_{k = 0}^{n}\Big(_{k}^{n}\Big)\big(d^{n - k}(a)\sigma^{n - k}(b)\big) \otimes \big(\tau^{k}(a) d^{k}(b)\big) \\ & = \sum_{k = 0}^{n}\Big(_{k}^{n}\Big)d^{n - k}( \tau^{k}(a)) d^{k}(\sigma^{n - k}(b)),
\end{align*}
which means that
\begin{align}
(\mathfrak{X} + \mathfrak{Y})^n = \sum_{k = 0}^{n}\Big(_{k}^{n}\Big)d^{n - k}( \tau^{k}(a)) d^{k}(\sigma^{n - k}(b)).
\end{align}
We claim that $d^n(ab) = (\mathfrak{X} + \mathfrak{Y})^n$ for all $n \in \mathbb{N}$. We proceed the proof by induction. It is clear that the previous equality is true for $n = 1$. As induction assumption, suppose that our claim is true for the positive integer $n$. Hence, we have
\begin{align*}
d^{n + 1}(ab) & = d \big(d^{n}(ab)\big) = d \Big(\sum_{k = 0}^{n}\Big(_{k}^{n}\Big)\tau^{k}\big(d^{n - k}(a)\big)\sigma^{n -k}\big(d^{k}(b)\big)\Big) \\ & = 
\sum_{k = 0}^{n}\Big(_{k}^{n}\Big)\tau^{k}\left(d^{n + 1 - k}(a)\right)\sigma^{n + 1 - k}\left(d^{k}(b)\right) +  \\ & \sum_{k = 0}^{n}\Big(_{k}^{n}\Big)\tau^{k + 1}\left(d^{n - k}(a)\right)\sigma^{n - k}\left(d^{k + 1}(b)\right) \\ & = \sum_{k = 0}^{n}\Big(_{k}^{n}\Big)\tau^{k}\left(d^{n + 1 - k}(a)\right)\sigma^{n + 1 - k}\left(d^{k}(b)\right) + \\& \sum_{k = 1}^{n + 1}\Big(_{k - 1}^{n}\Big)\tau^{k}\left(d^{n + 1 - k}(a)\right)\sigma^{n + 1 - k}\left(d^{k}(b)\right) \\ & = \sum_{k = 1}^{n}\Big(_{k}^{n}\Big)\tau^{k}\left(d^{n + 1 - k}(a)\right)\sigma^{n + 1 - k}\left(d^{k}(b)\right) + \\ & \sum_{k = 1}^{n}\Big(_{k - 1}^{n}\Big)\tau^{k}\left(d^{n + 1 - k}(a)\right)\sigma^{n + 1 - k}\left(d^{k}(b)\right) + d^{n+ 1}(a)\sigma^{n + 1}(b) + \tau^{n + 1}(a) d^{n + 1}(b)\\ & = \sum_{k = 1}^{n}\left[\Big(_{k}^{n}\Big) + \Big(_{k - 1}^{n}\Big)\right]\tau^{k}\left(d^{n + 1 - k}(a)\right)\sigma^{n + 1 - k}\left(d^{k}(b)\right) + \\ & d^{n+ 1}(a)\sigma^{n + 1}(b) + \tau^{n + 1}(a) d^{n + 1}(b) \\ & = \sum_{k = 1}^{n}\Big(_{k}^{n + 1}\Big)\tau^{k}\left(d^{n + 1 - k}(a)\right)\sigma^{n + 1 - k}\left(d^{k}(b)\right)+ \\ & d^{n+ 1}(a)\sigma^{n + 1}(b) + \tau^{n + 1}(a) d^{n + 1}(b) \\ & = \sum_{k = 0}^{n + 1}\Big(_{k}^{n + 1}\Big)d^{n + 1 - k}\left(\tau^{k}(a)\right)d^k \left(\sigma^{n + 1 - k}(b)\right),
\end{align*}
which means that
\begin{align}
d^{n + 1}(ab) = \sum_{k = 0}^{n + 1}\Big(_{k}^{n + 1}\Big)d^{n + 1 - k}\left(\tau^{k}(a)\right)d^k \left(\sigma^{n + 1 - k}(b)\right).
\end{align}
Comparing (2.6) and (2.7), we deduce that
\begin{align}
d^{n + 1}(ab) = (\mathfrak{X} + \mathfrak{Y} )^{n + 1}.
\end{align}
Hence, we get that
\begin{align*}
d^n(ab) & = (\mathfrak{X} + \mathfrak{Y})^n \\ & = \sum_{k = 0}^{n}\Big(_{k}^{n}\Big)d^{n - k}\big(\tau^{k}(a)\big)d^{k}\big(\sigma^{n -k}(b)\big) \\ & = \sum_{k = 0}^{n}\Big(_{k}^{n}\Big)\tau^{k}\big(d^{n - k}(a)\big) \sigma^{n -k} \big(d^{k}(b)\big)
\end{align*}
for all $n \in \mathbb{N}$.\\

We can also get the above result more easily. It is clear that $d(ab) = \mathfrak{X} + \mathfrak{Y}$. As induction assumption, suppose that our claim is true for the positive integer $n$, i.e. $d^{n}(ab) = (\mathfrak{X} + \mathfrak{Y})^n = \sum_{k = 0}^{n}\Big(_{k}^{n}\Big)d^{n - k}( \tau^{k}(a)) d^{k}(\sigma^{n - k}(b)) $. Now, we have the following expressions:
\begin{align*}
d^{n + 1}(ab) & = d^{n}(ab) \otimes d(ab) \\ & = (\mathfrak{X} + \mathfrak{Y})^{n} \otimes (\mathfrak{X} + \mathfrak{Y}) \\ & = (\mathfrak{X} + \mathfrak{Y})^{n + 1} \\ & = \sum_{k = 0}^{n + 1}\Big(_{k}^{n + 1}\Big)\mathfrak{X}^{n + 1 - k} \otimes \mathfrak{Y}^{k} \\ & = \sum_{k = 0}^{n + 1}\Big(_{k}^{n + 1}\Big) \big(d^{n + 1 -k}(a) \sigma^{n + 1 - k}(b)\big) \otimes \big(\tau^{k}(a) d^{k}(b) \big) \\ & = \sum_{k = 0}^{n + 1}\Big(_{k}^{n + 1}\Big)d^{n + 1 - k}\left(\tau^{k}(a)\right)\sigma^{n + 1 - k}\left(d^k (b)\right) \\ & = \sum_{k = 0}^{n + 1}\Big(_{k}^{n + 1}\Big)d^{n + 1 - k}\left(\tau^{k}(a)\right)d^k \left(\sigma^{n + 1 - k}(b)\right),
\end{align*}
as desired.
\\
\\
In the following theorem, using the idea expressed for derivations and $(\sigma, \tau)$-derivations, we obtain Leibniz's rule for $(g_1, h_1, g_2, h_2)$-derivations.

\begin{theorem} \label{2.1} Let $\mathcal{A}$ be an algebra and let $f:\mathcal{A} \rightarrow \mathcal{A}$ be a $(g_1, h_1, g_2, h_2)$-derivation, that is, $f(ab) = g_1(a) h_1(b) + g_2(a) h_2(b)$ for all $a, b \in \mathcal{A}$.  Suppose that $[g_1, g_2] = [h_1, h_2] =0$. Then, for each $n \in \mathbb{N}$ and $a, b \in \mathcal{A}$, we have
$$f^{n}(ab) = \sum_{k = 0}^{n}\Big(_{k}^{n}\Big)g_1^{n - k}\big(g_2^{k}(a)\big)h_1^{n -k}\big(h_2^{k}(b)\big),$$
where $g_1^{0} = g_2^0 = h_1^0 = h_2^{0} = I$.
\end{theorem}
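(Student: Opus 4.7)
The plan is to reproduce, almost verbatim, the construction used just above for $(\sigma,\tau)$-derivations, now with the pair $(g_1,h_1)$ playing the role of the "left" contribution and $(g_2,h_2)$ the "right" one. Set $\mathfrak{X}:=g_1(a)h_1(b)$ and $\mathfrak{Y}:=g_2(a)h_2(b)$, so that the defining identity of $f$ becomes $f(ab)=\mathfrak{X}+\mathfrak{Y}$. Using the operation $\otimes$ of (2.1) one computes
$$\mathfrak{X}\otimes\mathfrak{Y}=g_1g_2(a)\,h_1h_2(b),\qquad \mathfrak{Y}\otimes\mathfrak{X}=g_2g_1(a)\,h_2h_1(b),$$
so the hypotheses $[g_1,g_2]=[h_1,h_2]=0$ force $\mathfrak{X}\otimes\mathfrak{Y}=\mathfrak{Y}\otimes\mathfrak{X}$. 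A straightforward secondary induction then yields $\mathfrak{X}^n=g_1^n(a)\,h_1^n(b)$ and $\mathfrak{Y}^n=g_2^n(a)\,h_2^n(b)$, with the convention $\mathfrak{X}^0=\mathfrak{Y}^0=ab$.

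Next I would invoke Newton's binomial formula in the $\otimes$-setting---which is legitimate because $\otimes$ is associative and bidistributive by (2.2)--(2.4), and the commuting hypotheses propagate to $\mathfrak{X}^{i}\otimes\mathfrak{Y}^{j}=\mathfrak{Y}^{j}\otimes\mathfrak{X}^{i}$ for all $i,j\geq 0$---to obtain
$$(\mathfrak{X}+\mathfrak{Y})^n=\sum_{k=0}^{n}\binom{n}{k}\mathfrak{X}^{n-k}\otimes\mathfrak{Y}^{k}=\sum_{k=0}^{n}\binom{n}{k}g_1^{n-k}(g_2^{k}(a))\,h_1^{n-k}(h_2^{k}(b)).$$
The substance of the theorem is then the identity $f^{n}(ab)=(\mathfrak{X}+\mathfrak{Y})^{n}$, which I would establish by induction on $n$. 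The base case $n=1$ is just the defining equation of $f$. For the inductive step, apply $f$ term-by-term to the expansion of $f^{n}(ab)$, using $f(uv)=g_1(u)h_1(v)+g_2(u)h_2(v)$ with $u=g_1^{n-k}(g_2^{k}(a))$ and $v=h_1^{n-k}(h_2^{k}(b))$. This yields two sums whose general terms, after the substitutions $g_2g_1^{n-k}=g_1^{n-k}g_2$ and $h_2h_1^{n-k}=h_1^{n-k}h_2$, align so that Pascal's relation $\binom{n}{k}+\binom{n}{k-1}=\binom{n+1}{k}$ delivers the formula at level $n+1$. Structurally, this is the same index-shift calculation already carried out in the $(\sigma,\tau)$-case and can be transcribed with only notational changes.

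The one genuine piece of bookkeeping is to check that $[g_1,g_2]=0$ really implies $g_2g_1^{k}=g_1^{k}g_2$ for all $k\geq 0$ (and similarly for the $h_i$), but this follows by trivial induction on $k$. A cleaner alternative, patterned on the short derivation given just after (2.8), is to observe that once the $\otimes$-framework is in place one has $f^{n+1}(ab)=f^{n}(ab)\otimes f(ab)=(\mathfrak{X}+\mathfrak{Y})^{n}\otimes(\mathfrak{X}+\mathfrak{Y})=(\mathfrak{X}+\mathfrak{Y})^{n+1}$; expanding the right-hand side by Newton's formula then closes the induction immediately, bypassing the Pascal manipulation entirely. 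I would include both proofs, since the second one most transparently exhibits the advertised equivalence between Leibniz's rule and the binomial theorem.
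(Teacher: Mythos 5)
Your proposal is correct and follows essentially the same route as the paper: the paper's proof of Theorem \ref{2.1} is precisely your ``cleaner alternative'' ($f^{n+1}(ab)=f^{n}(ab)\otimes f(ab)=(\mathfrak{X}_1+\mathfrak{X}_2)^{n+1}$, then expand by the binomial formula), while your primary Pascal-rule induction is the same computation the paper carries out in detail for $(\sigma,\tau)$-derivations just before the theorem. If anything, your explicit term-by-term application of $f$ with the commutation $g_2g_1^{k}=g_1^{k}g_2$ is the more self-contained justification of the step the paper encodes in the $\otimes$-notation.
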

\begin{proof} Let $a$ and $b$ be two arbitrary elements of $\mathcal{A}$. We have
$$f(ab) = g_1(a) h_1(b) + g_2(a) h_2(b)$$
Putting $\mathfrak{X}_i = g_i(a)h_i(b)$, where $i \in \{1,2\}$, we have $f(ab) = \mathfrak{X}_1 + \mathfrak{X}_2$. In view of (2.1), we consider the operation $\otimes$ between $\mathfrak{X}_1$ and $\mathfrak{X}_2$ as follows: $$\mathfrak{X}_1 \otimes \mathfrak{X}_2 = (g_1(a)h_1(b)) \otimes (g_2(a)h_2(b)) = g_1 g_2 (a) h_1 h_2(b).$$
Since we are assuming that $[g_1,g_2] = [h_1, h_2] = 0$, it is observed that $\mathfrak{X}_1 \otimes \mathfrak{X}_2 = \mathfrak{X}_2 \otimes \mathfrak{X}_1$. Moreover, we have $$\mathfrak{X}_1^{n} = \underbrace{\mathfrak{X}_1 \otimes ... \otimes \mathfrak{X}_1}_{n-times} = \underbrace{g_1(a)h_1(b) \otimes ... \otimes g_1(a)h_1(b)}_{n-times} = g_1^n(a)h_1^n(b)$$ and $$\mathfrak{X}_2^{n} = \underbrace{\mathfrak{X}_2 \otimes ... \otimes \mathfrak{X}_2}_{n-times} = g_2^n(a)h_2^n(b)$$ for all $n \in \mathbb{N}$. Assume that $\mathfrak{X}_i^{0} = \textbf{1}_{(a,b)}^{\otimes} = ab$, where $i \in \{2, 3\}$.
Our task is to show that $f^{n}(ab) = (\mathfrak{X}_1 + \mathfrak{X}_2)^{n}$ for all $n \in \mathbb{N}$. We proceed the proof by induction. Obviously, the previous equality is true for $n = 1$. As induction assumption, suppose that our claim is true for the positive integer $n$, i.e. \begin{align*}
f^n(ab) & = (\mathfrak{X}_1 + \mathfrak{X}_2)^n \\ & = \sum_{k = 0}^{n}\Big(_{k}^{n}\Big)\mathfrak{X}_1^{n - k} \otimes \mathfrak{X}_2^{k} \\ & =
\sum_{k = 0}^{n}\Big(_{k}^{n}\Big)\big(g_1^{n - k}(a)h_1^{n - k}(b)\big) \otimes \big(g_2^{k}(a)h_2^{k}(b)\big) \\ & = \sum_{k = 0}^{n}\Big(_{k}^{n}\Big)g_1^{n - k}\big(g_2^{k}(a)\big)h_1^{n -k}\big(h_2^{k}(b)\big).
\end{align*}

We now have:
\begin{align*}
f^{n + 1}(ab) & = f^{n}(ab) \otimes f(ab) \\ & = (\mathfrak{X}_1 + \mathfrak{X}_2)^{n} \otimes (\mathfrak{X}_1 + \mathfrak{X}_2) \\ & = (\mathfrak{X}_1 + \mathfrak{X}_2)^{n + 1} \\ & = \sum_{k = 0}^{n + 1}\Big(_{k}^{n + 1}\Big)\mathfrak{X}_1^{n + 1 - k} \otimes \mathfrak{X}_2^{k} \\ & = \sum_{k = 0}^{n+1}\Big(_{k}^{n+1}\Big)\big(g_1^{n+1 - k}(a)h_1^{n+1 - k}(b)\big) \otimes \big(g_2^{k}(a)h_2^{k}(b)\big) \\ & =
\sum_{k = 0}^{n + 1}\Big(_{k}^{n +1}\Big)g_1^{n+ 1 - k}\big(g_2^{k}(a)\big)h_1^{n+ 1 -k}\big(h_2^{k}(b)\big).
\end{align*}
Thereby, our proof is complete.
\end{proof}

In the following, there are some immediate corollaries.

\begin{corollary} Let $\mathcal{A}$ be an algebra, let $\Delta:\mathcal{A} \rightarrow \mathcal{A}$ be an additive generalized $(\sigma, \tau$)-derivation corresponding to an additive mapping $d$ such that $[\tau, \Delta] = [\sigma, d] =0$. Then, for each $n \in \mathbb{N}$ and $a, b \in \mathcal{A}$, we have
$$\Delta^{n}(ab) = \sum_{k = 0}^{n}\Big(_{k}^{n}\Big)\Delta^{n - k}\big(\tau^{k}(a)\big)d^{k}\big(\sigma^{n -k}(b)\big),$$
where $\Delta^{0} = \sigma^{0} = \tau^0 = d^0 = I$.
\end{corollary}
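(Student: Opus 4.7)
The plan is to obtain this corollary as a direct specialization of Theorem \ref{2.1}. Observe that the defining identity
$$\Delta(ab) = \Delta(a)\sigma(b) + \tau(a) d(b)$$
exhibits $\Delta$ as a $(g_1, h_1, g_2, h_2)$-derivation in the sense of Theorem \ref{2.1} with the particular choices
$$g_1 = \Delta, \quad h_1 = \sigma, \quad g_2 = \tau, \quad h_2 = d,$$
so that $f = \Delta$. I would verify this correspondence and then check that the hypotheses of the theorem are met: the requirement $[g_1, g_2] = 0$ becomes $[\Delta, \tau] = 0$, which is the same as $[\tau, \Delta] = 0$ (commutators differ by sign), and $[h_1, h_2] = 0$ is exactly $[\sigma, d] = 0$. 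Both are assumed in the corollary.

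Applying the conclusion of Theorem \ref{2.1} verbatim therefore yields
$$\Delta^{n}(ab) = \sum_{k = 0}^{n}\binom{n}{k} g_1^{n-k}\bigl(g_2^{k}(a)\bigr) h_1^{n-k}\bigl(h_2^{k}(b)\bigr) = \sum_{k = 0}^{n}\binom{n}{k}\Delta^{n-k}\bigl(\tau^{k}(a)\bigr)\sigma^{n-k}\bigl(d^{k}(b)\bigr).$$
The final cosmetic step is to bring the right-hand factor into the form stated in the corollary. Since $[\sigma, d] = 0$, the operators $\sigma$ and $d$ commute, so $\sigma^{n-k}\bigl(d^{k}(b)\bigr) = d^{k}\bigl(\sigma^{n-k}(b)\bigr)$, which delivers the claimed identity.

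There is no real obstacle here: the entire content of the corollary is packaged in Theorem \ref{2.1}, and the only non-trivial bookkeeping is recognizing the generalized $(\sigma,\tau)$-derivation as a $(g_1,h_1,g_2,h_2)$-derivation with the correct matching of hypotheses, and using the $[\sigma,d] = 0$ commutation once at the end to rewrite the summand in the symmetric form presented in the statement.
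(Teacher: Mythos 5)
Your proposal is correct and is essentially the paper's own proof: the paper likewise observes that $\Delta$ is a $(\Delta, \sigma, \tau, d)$-derivation and invokes Theorem \ref{2.1}. You are in fact slightly more careful than the paper, since you explicitly verify the commutator hypotheses and note the final use of $[\sigma,d]=0$ to rewrite $\sigma^{n-k}\bigl(d^{k}(b)\bigr)$ as $d^{k}\bigl(\sigma^{n-k}(b)\bigr)$, a step the paper leaves implicit.
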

\begin{proof} It is enough to note that $\Delta$ is, indeed, a $(\Delta, \sigma, \tau, d)$-derivation. Now, Theorem \ref{2.1} gives the result.
\end{proof}

\begin{corollary} \label{3} Let $\mathcal{A}$ be an algebra, let the triple $(d_1, d_2, d_3):\mathcal{A} \rightarrow \mathcal{A}$ be a ternary derivation, that is, $d_1(ab) = d_2(a) b + a d_3(b)$ holds for all $a, b \in \mathcal{A}$. Then, for each $n \in \mathbb{N}$ and $a, b \in \mathcal{A}$, we have
$$d_1^{n}(ab) = \sum_{k = 0}^{n}\Big(_{k}^{n}\Big)d_2^{n - k}(a)d_3^{k}(b),$$
where $d_2^{0} = d_3^{0} = I$.
\end{corollary}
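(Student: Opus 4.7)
The plan is to recognize that the defining identity of a ternary derivation fits directly into the $(g_1,h_1,g_2,h_2)$-derivation framework of Theorem \ref{2.1}. Concretely, I would set $f := d_1$, $g_1 := d_2$, $h_1 := I$, $g_2 := I$, and $h_2 := d_3$. Then the hypothesis $d_1(ab) = d_2(a)b + a\,d_3(b)$ is precisely the equation $f(ab) = g_1(a)h_1(b) + g_2(a)h_2(b)$, and $f = d_1$ is additive by assumption, so $d_1$ is a $(d_2, I, I, d_3)$-derivation in the sense of the preceding section.

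Next I would verify the commutativity hypotheses required by Theorem \ref{2.1}. These reduce to $[g_1, g_2] = [d_2, I] = 0$ and $[h_1, h_2] = [I, d_3] = 0$, both of which hold trivially because the identity mapping $I$ commutes with every self-map of $\mathcal{A}$. Thus the machinery of Theorem \ref{2.1} applies with no further assumptions on the triple $(d_1,d_2,d_3)$.

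Finally, I would invoke Theorem \ref{2.1} directly: for every $n \in \mathbb{N}$ and $a,b \in \mathcal{A}$,
\begin{align*}
d_1^n(ab) \;=\; f^n(ab) \;=\; \sum_{k=0}^{n}\binom{n}{k} g_1^{n-k}\bigl(g_2^{k}(a)\bigr)\,h_1^{n-k}\bigl(h_2^{k}(b)\bigr).
\end{align*}
Substituting $g_1 = d_2$, $g_2 = I$, $h_1 = I$, $h_2 = d_3$ and using $I^{n-k} = I^{k} = I$ collapses the inner applications, yielding $d_1^n(ab) = \sum_{k=0}^{n}\binom{n}{k} d_2^{n-k}(a)\,d_3^{k}(b)$, as claimed. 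There is essentially no obstacle here; the only conceptual step is noticing the right identification, after which the commutativity conditions of the master theorem become automatic and no induction needs to be redone.
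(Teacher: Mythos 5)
Your proposal is correct and follows exactly the paper's own argument: the paper likewise views $d_1$ as a $(d_2, I, I, d_3)$-derivation and invokes Theorem \ref{2.1}, with the commutator hypotheses holding trivially because $I$ commutes with everything. You simply spell out the verification and substitution in more detail than the paper does.
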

\begin{proof} We can consider $d_1$ as a $(d_2, I, I, d_3)$-derivation and the required result is obtained easily from Theorem \ref{2.1}.
\end{proof}

\section{calculation of Leibniz's rule via non-commutative Newton's binomial formula}

In the previous part, using Newton's formula, we obtained Leibniz's rule for some generalized types of derivations. Using the commutative Newton's formula made us consider restrictive conditions for those mappings. For example, we obtained Leibniz's rule for $(g_1, h_1, g_2, h_2)$-derivations with the condition that $[g_1, g_2] = [h_1, h_2] =0$. Leibniz's rule for $(\sigma, \tau)$-derivations was also obtained with the condition that $[\tau, d] = [\sigma, d] =0$. As can be seen, these conditions are restrictive. Hence, in this section, we are going to derive Leibniz's rule for generalized types of derivations using the non-commutative version of Newton's formula and without considering those restrictive conditions.

The author has studied the non-commutative version of Newton's binomial formula (see \cite{H}). First, we state some prerequisites. A linear mapping $\delta: \mathcal{A} \rightarrow \mathcal{A}$ is called a generalized derivation if there exists a derivation $d:\mathcal{A} \rightarrow \mathcal{A}$ such that $\delta$ satisfies $\delta(ab) = \delta(a) b + a d(b)$ for all $a, b \in \mathcal{A}$. Let $b_1, b_2$ be two arbitrary elements of $\mathcal{A}$. A linear mapping $\delta_{b_1, b_2}:\mathcal{A} \rightarrow \mathcal{A}$ defined by $\delta_{b_1, b_2}(a) = b_1 a - a b_2$ is a generalized derivation associated with both the inner derivations $d_{b_1}$ and $d_{b_2}$. Indeed, we have $$\delta_{b_1, b_2}(ab) = \delta_{b_1, b_2}(a)b + a d_{b_2}(b) = d_{b_1}(a) b + a \delta_{b_1, b_2}(b)$$ for all $a, b \in \mathcal{A}$. We say that $\delta_{b_1, b_2}$ is an inner generalized derivation. As seen, a generalized derivation $\delta$ can satisfy $\delta(ab) = a \delta(b) + d(a) b$ for all $a, b \in \mathcal{A}$.

We begin with the following Lemma.
\begin{lemma}\cite[Lemma 1]{H}\label{1} Let $\mathcal{A}$ be an algebra, and let $b_1, b_2$ be two arbitrary elements of $\mathcal{A}$. Then
\begin{align}
\delta_{b_1, b_2}^{n}(a) = \sum_{k = 0}^{n}(-1)^{k}\Big(_{k}^{n}\Big)b_1^{n - k} a b_2^{k},
\end{align}
for every non-negative integer $n$ and every $a \in \mathcal{A}$.
\end{lemma}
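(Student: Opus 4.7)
The plan is to proceed by induction on $n$. The base case $n=0$ is immediate since both sides reduce to $a$, and $n=1$ recovers the definition $\delta_{b_1,b_2}(a) = b_1 a - a b_2$. For the inductive step, assuming the identity at level $n$, I would apply $\delta_{b_1,b_2}$ once more and use its definition termwise on the given sum.

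Concretely, writing $\delta_{b_1,b_2}^{n+1}(a) = b_1 \delta_{b_1,b_2}^n(a) - \delta_{b_1,b_2}^n(a)\, b_2$ and substituting the inductive hypothesis produces
\[
\sum_{k=0}^{n}(-1)^k \binom{n}{k} b_1^{n+1-k} a\, b_2^{k} \;-\; \sum_{k=0}^{n}(-1)^k \binom{n}{k} b_1^{n-k} a\, b_2^{k+1}.
\]
After the shift $k \mapsto k-1$ in the second sum, the boundary terms $k=0$ in the first and $k=n+1$ in the (re-indexed) second contribute $b_1^{n+1}a$ and $(-1)^{n+1} a\, b_2^{n+1}$ respectively, while for $1 \le k \le n$ the coefficient of $b_1^{n+1-k} a\, b_2^k$ becomes $(-1)^k\bigl[\binom{n}{k} + \binom{n}{k-1}\bigr]$. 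Pascal's identity collapses this to $(-1)^k\binom{n+1}{k}$, and combining the boundary and interior contributions yields $\sum_{k=0}^{n+1}(-1)^k\binom{n+1}{k} b_1^{n+1-k} a\, b_2^k$, completing the induction.

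A conceptually cleaner justification, which explains why the formula looks like a signed binomial expansion, is to denote by $L_{b_1}$ and $R_{b_2}$ the operators of left multiplication by $b_1$ and right multiplication by $b_2$ on $\mathcal{A}$. Then $\delta_{b_1,b_2} = L_{b_1} - R_{b_2}$, and $L_{b_1}$ and $R_{b_2}$ commute as operators by associativity of $\mathcal{A}$. The usual (commutative) Newton binomial formula therefore applies to $L_{b_1} - R_{b_2}$, and evaluating $L_{b_1}^{n-k} R_{b_2}^{k}$ at $a$ gives $b_1^{n-k} a\, b_2^{k}$, yielding the claim at once. Either way, there is no substantial obstacle; the only care required is the index bookkeeping in the inductive combination of the two sums.
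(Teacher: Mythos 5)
Your proof is correct. Note that the paper itself offers no proof of this lemma; it is imported verbatim from \cite[Lemma 1]{H}, where the argument is exactly the induction you give, so your first proof matches the cited source: the base cases, the termwise application of $\delta_{b_1,b_2}$, the shift $k\mapsto k-1$, the boundary terms $b_1^{n+1}a$ and $(-1)^{n+1}a\,b_2^{n+1}$, and the Pascal collapse are all in order. Your second justification, writing $\delta_{b_1,b_2}=L_{b_1}-R_{b_2}$ and invoking the commutativity of left and right multiplications (associativity of $\mathcal{A}$) to apply the ordinary binomial theorem in the operator algebra, is a genuinely cleaner packaging and also explains why the paper's later ``commutative Newton's formula'' heuristic works; it is moreover the safer formulation when $\mathcal{A}$ is not unital, since then $b_1^{0}$ and $b_2^{0}$ are not elements of $\mathcal{A}$ and the $k=0$ and $k=n$ terms of the displayed sum must be read as $b_1^{n}a$ and $(-1)^{n}a\,b_2^{n}$, i.e.\ as $L_{b_1}^{0}=R_{b_2}^{0}=I$, which your operator version handles automatically.
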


\begin{theorem}\label{2} \cite[Theorem 1]{H} Let $\mathcal{A}$ be a unital algebra. Then for any $a, b \in \mathcal{A}$ and any non-negative integer $n$, the following equality holds true
\begin{eqnarray}\label{2.3}
a^{n} = \sum_{k = 0}^{n}\Big(_{k}^{n}\Big)\delta_{a, b}^{n - k}(\textbf{1})b^{k}.
\end{eqnarray}
\end{theorem}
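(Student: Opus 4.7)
The plan is to prove the identity by induction on $n$, exploiting the single identity
\[
ax = \delta_{a,b}(x) + xb, \qquad x \in \mathcal{A},
\]
which is nothing more than the defining relation $\delta_{a,b}(x) = ax - xb$ rewritten so as to express left multiplication by $a$ in terms of $\delta_{a,b}$ and right multiplication by $b$. This is exactly the decomposition needed to pass from $a^{n}$ to $a^{n+1}$ while preserving the shape of the right-hand side of the claimed formula.

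First I would dispatch the base case $n = 0$, where both sides collapse to $\textbf{1}$. Then, assuming the formula for some $n \geq 0$, I would compute $a^{n+1} = a \cdot a^{n}$ by multiplying the inductive hypothesis on the left by $a$ and applying the displayed identity with $x = \delta_{a,b}^{n-k}(\textbf{1})$ inside each summand. Each such term splits into $\delta_{a,b}^{n+1-k}(\textbf{1})b^{k}$ plus $\delta_{a,b}^{n-k}(\textbf{1})b^{k+1}$, so the expression for $a^{n+1}$ becomes a sum of two series. After reindexing the second series via $k \mapsto k-1$, the two series share the common monomial $\delta_{a,b}^{n+1-k}(\textbf{1})b^{k}$, whose combined coefficient is $\binom{n}{k} + \binom{n}{k-1} = \binom{n+1}{k}$ for $1 \leq k \leq n$ by Pascal's rule, while the boundary terms $k = 0$ and $k = n+1$ retain coefficient $1 = \binom{n+1}{0} = \binom{n+1}{n+1}$. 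This closes the induction.

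I do not expect a substantive obstacle: the whole argument reduces to the left-multiplication identity above and Pascal's rule. An equally viable alternative is a direct computation based on Lemma \ref{1}: substitute $\delta_{a,b}^{n-k}(\textbf{1}) = \sum_{j=0}^{n-k}(-1)^{j}\binom{n-k}{j}a^{n-k-j}b^{j}$ into the right-hand side, swap the order of summation by setting $m = j+k$, and use the refactorization $\binom{n}{m-j}\binom{n-m+j}{j} = \binom{n}{m}\binom{m}{j}$ together with $\sum_{j=0}^{m}(-1)^{j}\binom{m}{j} = 0$ for $m \geq 1$ to see that only the $m=0$ term survives, leaving $a^{n}$. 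Both routes are essentially routine manipulations with binomial coefficients, but the inductive version is shorter and more conceptual, so that is the route I would present.
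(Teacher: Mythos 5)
Your proof is correct. Note, however, that the paper itself supplies no argument for this statement: Theorem \ref{2} is imported verbatim from \cite[Theorem 1]{H}, so there is no in-paper proof to compare against. Your primary route --- induction on $n$ via the operator identity $ax = \delta_{a,b}(x) + xb$ applied with $x = \delta_{a,b}^{n-k}(\textbf{1})$, followed by reindexing and Pascal's rule --- is sound (the only structural fact used is associativity, so left multiplication by $a$ and right multiplication by $b$ commute as operators, which is all the splitting of each summand requires), and the boundary terms are handled correctly. Your alternative route is also valid and is arguably the one the source intends, since the paper states Lemma \ref{1} immediately before the theorem precisely so that $\delta_{a,b}^{n-k}(\textbf{1}) = \sum_{j}(-1)^{j}\binom{n-k}{j}a^{n-k-j}b^{j}$ is available; your verification that the double sum telescopes to $a^{n}$ via $\binom{n}{m-j}\binom{n-m+j}{j} = \binom{n}{m}\binom{m}{j}$ and the vanishing of the alternating sum for $m \geq 1$ exhibits the theorem as the exact inverse of the Lemma. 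The inductive version is self-contained and shorter; the Lemma-based version makes transparent why the pair of identities \eqref{2.3} and (3.1) constitute a binomial inversion of one another. Either would serve as a complete proof.
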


As a consequence of the above theorem, the non-commutative Newton's binomial formula is obtained.

\begin{corollary} \label{3} \cite[Corollary 1]{H} Let $\mathcal{A}$ be a unital algebra and $a, b \in \mathcal{A}$. Then, \\
 (The non-commutative Newton's binomial formula)
\begin{eqnarray*}
(a + b)^n & =& \sum_{k = 0}^{n}\Big(_{k}^{n}\Big)\delta_{a + b, c}^{n - k}(\textbf{1})c^{k} \\ & =& \sum_{k = 0}^{n}\sum_{j = 0}^{n - k}\Big(_{k}^{n}\Big)\Big(_{j}^{n - k}\Big)(-1)^{j}(a + b)^{n - k - j}c^{j + k},
\end{eqnarray*}
where $c$ is an arbitrary element of $\mathcal{A}$ and $n \in \mathbb{N} \cup \{0\}$.
\end{corollary}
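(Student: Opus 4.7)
The plan is to derive this corollary as an immediate two-step consequence of Theorem~\ref{2} and Lemma~\ref{1}, both already established in the excerpt.

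First, I would obtain the first equality by directly specializing Theorem~\ref{2}. Since $\mathcal{A}$ is unital and $a+b$ is a legitimate element of $\mathcal{A}$, I may apply (\ref{2.3}) with the element $a$ of that theorem replaced by $a+b$ and the element $b$ of that theorem replaced by $c$. This gives
\begin{equation*}
(a+b)^{n} \;=\; \sum_{k=0}^{n}\binom{n}{k}\,\delta_{a+b,\,c}^{\,n-k}(\mathbf{1})\,c^{k},
\end{equation*}
which is precisely the first line of the claim.

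Next, I would expand each factor $\delta_{a+b,\,c}^{\,n-k}(\mathbf{1})$ by invoking Lemma~\ref{1} with $b_1 = a+b$, $b_2 = c$, and argument $\mathbf{1}$. Since $b_1^{\,n-k-j}\,\mathbf{1} = (a+b)^{n-k-j}$, the lemma yields
\begin{equation*}
\delta_{a+b,\,c}^{\,n-k}(\mathbf{1}) \;=\; \sum_{j=0}^{n-k}(-1)^{j}\binom{n-k}{j}(a+b)^{n-k-j}c^{j}.
\end{equation*}
Substituting this expression into the first equality, interchanging the two summations, and combining $c^{j}\cdot c^{k} = c^{j+k}$ produces the double-sum form stated in the corollary.

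There is no substantive obstacle: the proof is a mechanical assembly of two results that have been proved elsewhere. The only point requiring care is bookkeeping of the index ranges, which match naturally, namely $0 \le k \le n$ from Theorem~\ref{2} and $0 \le j \le n-k$ from Lemma~\ref{1} applied to the $(n-k)$-th iterate of $\delta_{a+b,c}$. The case $n=0$ is covered by the empty product convention, so the formula extends to all $n\in\mathbb{N}\cup\{0\}$ as claimed.
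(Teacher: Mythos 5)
Your proposal is correct and follows exactly the route the paper intends: the corollary is presented as an immediate consequence of Theorem~\ref{2} (applied with $a+b$ in place of $a$ and $c$ in place of $b$), with the second line obtained by expanding $\delta_{a+b,c}^{n-k}(\textbf{1})$ via Lemma~\ref{1}. The index bookkeeping and the $n=0$ case are handled correctly, so nothing is missing.
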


We are now ready to obtain Leibniz's rule for generalized types of derivations in the general case. Let $a,b$ be two arbitrary elements of $\mathcal{A}$. Recall that the identity element with respect to the operation $\otimes$ is $I(a)I(b) = ab$ which is denoted by $\textbf{1}_{(a,b)}^{\otimes}$, i.e. $\textbf{1}_{(a,b)}^{\otimes} = ab.$

\begin{theorem} \label{4} Let $\mathcal{A}$ be an algebra and let $f:\mathcal{A} \rightarrow \mathcal{A}$ be a $(g_1, h_1, g_2, h_2)$-derivation, that is, $f(ab) = g_1(a) h_1(b) + g_2(a) h_2(b)$ for all $a, b \in \mathcal{A}$. Set $\mathfrak{X}_1 = g_1(a) h_1(b)$ and $\mathfrak{X}_2 = g_2(a) h_2(b)$. Then, for each $n \in \mathbb{N}$ and $a, b \in \mathcal{A}$, we have
\begin{align}
f^{n}(ab) & = (\mathfrak{X}_1 + \mathfrak{X}_2)^{n} = \sum_{k = 0}^{n}\Big(_{k}^{n}\Big)\delta_{\mathfrak{X}_1 + \mathfrak{X}_2, \mathfrak{X}_2}^{n - k}(\textbf{1}_{(a,b)}^{\otimes}) \otimes \mathfrak{X}_2^{k} \\ & = \sum_{k = 0}^{n}\sum_{j = 0}^{n - k}\Big(_{k}^{n}\Big)\Big(_{j}^{n - k}\Big)(-1)^{j}(\mathfrak{X}_1 + \mathfrak{X}_2)^{n - k - j}\otimes \mathfrak{X}_2^{j + k},
\end{align}
where $\delta_{\mathfrak{X}_1 + \mathfrak{X}_2, \mathfrak{X}_2}^{0}= g_1^{0} = g_2^0 = h_1^0 = h_2^{0} = I$.
\end{theorem}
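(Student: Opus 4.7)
The overall strategy is to decouple the theorem into two independent claims: first, the identity $f^{n}(ab) = (\mathfrak{X}_{1} + \mathfrak{X}_{2})^{n}$ inside the $\otimes$-algebra generated by $\mathfrak{X}_{1}$ and $\mathfrak{X}_{2}$; and second, the expansion of $(\mathfrak{X}_{1} + \mathfrak{X}_{2})^{n}$ itself, which is a direct instance of Theorem \ref{2} and Corollary \ref{3} applied inside that $\otimes$-algebra, whose two-sided unit is $\textbf{1}_{(a,b)}^{\otimes} = ab$. The essential content lies in the first identity; unlike Theorem \ref{2.1}, no commutativity hypothesis on $g_{1}, g_{2}, h_{1}, h_{2}$ is required, because the non-commutative binomial formula does not ask the two summands to commute.

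For the first identity I would proceed by induction on $n$. The base case $n = 1$ is the defining relation $f(ab) = \mathfrak{X}_{1} + \mathfrak{X}_{2}$. For the inductive step, iterated distributivity and associativity of $\otimes$ expand $(\mathfrak{X}_{1} + \mathfrak{X}_{2})^{n}$ as $\sum_{\epsilon \in \{1,2\}^{n}} g_{\epsilon_{1}}\cdots g_{\epsilon_{n}}(a) \cdot h_{\epsilon_{1}}\cdots h_{\epsilon_{n}}(b)$, which equals $f^{n}(ab)$ by the induction hypothesis. Each summand is literally a product of two elements of $\mathcal{A}$, so applying $f$ to it yields, via the $(g_{1}, h_{1}, g_{2}, h_{2})$-derivation property and the additivity of $f$, two new summands in which an extra $g_{i}$ and $h_{i}$ are prepended on the left. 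Reindexing over $\epsilon' \in \{1,2\}^{n+1}$ then identifies the result as $(\mathfrak{X}_{1} + \mathfrak{X}_{2})^{n+1}$. One may compress this into the single line $f^{n+1}(ab) = f(ab) \otimes f^{n}(ab)$, analogous to the shortcut used after (2.8) in the commutative case, except that here the new factor sits on the left, in the order the inductive application of $f$ actually dictates.

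For the second identity I would apply Theorem \ref{2} inside the formal unital algebra under $(\otimes, +)$ generated by $\mathfrak{X}_{1}, \mathfrak{X}_{2}$, substituting $\mathfrak{X}_{1} + \mathfrak{X}_{2}$ for the variable $a$ and $\mathfrak{X}_{2}$ for the variable $b$; this gives the first binomial sum in the statement. The fully explicit double sum then follows from Corollary \ref{3} with the same substitutions and $c = \mathfrak{X}_{2}$, which simply unpacks $\delta_{\mathfrak{X}_{1} + \mathfrak{X}_{2}, \mathfrak{X}_{2}}^{n-k}(\textbf{1}_{(a,b)}^{\otimes})$ via Lemma \ref{1}.

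The main obstacle is conceptual rather than computational: one must verify that Lemma \ref{1}, Theorem \ref{2}, and Corollary \ref{3}, originally proved for a genuine unital associative algebra, transfer verbatim to the formal $\otimes$-algebra of expressions $T(a) S(b)$. All the axioms used in their proofs — bilinearity and distributivity over addition, associativity of $\otimes$, and the existence of a two-sided unit $\textbf{1}_{(a,b)}^{\otimes}$ — have already been set up in Section 2. Once this transfer is acknowledged, no new ideas are required; the proof is essentially a syntactic replacement of ordinary multiplication by $\otimes$.
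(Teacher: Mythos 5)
Your proposal is correct and follows essentially the same route as the paper: both reduce the theorem to the single identity $f^{n}(ab) = (\mathfrak{X}_1 + \mathfrak{X}_2)^{n}$, proved by induction, and then obtain the two displayed expansions by invoking the non-commutative Newton binomial formula (Theorem 3.2 and Corollary 3.3) inside the unital $\otimes$-algebra with unit $\textbf{1}_{(a,b)}^{\otimes} = ab$. The one substantive difference is at the inductive step: the paper simply asserts $f^{n+1}(ab) = f^{n}(ab) \otimes f(ab)$ with no justification, whereas you actually prove the corresponding identity by expanding $(\mathfrak{X}_1+\mathfrak{X}_2)^{n}$ into the $2^{n}$ words $g_{\epsilon_1}\cdots g_{\epsilon_n}(a)\,h_{\epsilon_1}\cdots h_{\epsilon_n}(b)$ and applying the defining relation of $f$ to each summand; this is the real content of the theorem, and your version correctly records that the new factor is prepended on the left, i.e. $f^{n+1}(ab) = f(ab)\otimes f^{n}(ab)$, the paper's right-hand placement giving the same answer only because $(\mathfrak{X}_1+\mathfrak{X}_2)^{n}$ commutes with $\mathfrak{X}_1+\mathfrak{X}_2$ under $\otimes$. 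Your closing caveat, that Lemma 3.1, Theorem 3.2 and Corollary 3.3 must be checked to transfer verbatim to the formal $\otimes$-algebra of expressions $T(a)S(b)$, is likewise a point the paper takes for granted rather than verifies; making it explicit strengthens, and does not alter, the argument.
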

\begin{proof} Let $a$ and $b$ be two arbitrary elements of $\mathcal{A}$. We have
$$f(ab) = g_1(a) h_1(b) + g_2(a) h_2(b).$$
It is observed that $f(ab) = \mathfrak{X}_1 + \mathfrak{X}_2$. In view of (2.1), we consider the operation $\otimes$ between $\mathfrak{X}_1$ and $\mathfrak{X}_2$ as follows: $$\mathfrak{X}_1 \otimes \mathfrak{X}_2 = (g_1(a)h_1(b)) \otimes (g_2(a)h_2(b)) = g_1 g_2 (a) h_1 h_2(b).$$
Moreover, we have  $$\mathfrak{X}_1^{n} = \underbrace{\mathfrak{X}_1 \otimes ... \otimes \mathfrak{X}_1}_{n-times} = \underbrace{g_1(a)h_1(b) \otimes ... \otimes g_1(a)h_1(b)}_{n-times} = g_1^n(a)h_1^n(b)$$ and $$\mathfrak{X}_2^{n} = \underbrace{\mathfrak{X}_2 \otimes ... \otimes \mathfrak{X}_2}_{n-times} = g_2^n(a)h_2^n(b)$$ for all $n \in \mathbb{N}$. Also, $\mathfrak{X}_i^{0} = \textbf{1}_{(a,b)}^{\otimes}$, where $i \in \{2, 3\}$. According to Corollary \ref{3}, we have
\begin{align}
(\mathfrak{X}_1 + \mathfrak{X}_2)^{n} & = \sum_{k = 0}^{n}\Big(_{k}^{n}\Big)\delta_{\mathfrak{X}_1 + \mathfrak{X}_2, \mathfrak{X}_2}^{n - k}(\textbf{1}_{(a,b)}^{\otimes}) \otimes \mathfrak{X}_2^{k} \\ & = \sum_{k = 0}^{n}\sum_{j = 0}^{n - k}\Big(_{k}^{n}\Big)\Big(_{j}^{n - k}\Big)(-1)^{j}(\mathfrak{X}_1 + \mathfrak{X}_2)^{n - k - j}\otimes \mathfrak{X}_2^{j + k}.
\end{align}
Our task is to show that $f^{n}(ab) = (\mathfrak{X}_1 + \mathfrak{X}_2)^{n}$ for all $n \in \mathbb{N}$. We proceed the proof by induction. Obviously, the previous equality is true for $n = 1$. As induction assumption, suppose that our claim is true for the positive integer $n$, i.e.
$f^n(ab) = (\mathfrak{X}_1 + \mathfrak{X}_2)^n$.

We now have:
\begin{align*}
f^{n + 1}(ab) & = f^{n}(ab) \otimes f(ab) \\ & = (\mathfrak{X}_1 + \mathfrak{X}_2)^{n} \otimes (\mathfrak{X}_1 + \mathfrak{X}_2) \\ & = (\mathfrak{X}_1 + \mathfrak{X}_2)^{n + 1} \\ & = \sum_{k = 0}^{n + 1}\Big(_{k}^{n+1}\Big)\delta_{\mathfrak{X}_1 + \mathfrak{X}_2, \mathfrak{X}_2}^{n+1 - k}(\textbf{1}_{(a,b)}^{\otimes}) \otimes \mathfrak{X}_2^{k},
\end{align*}
as desired.
\end{proof}

\begin{example} Let $\mathcal{A}$ be an algebra and let $f:\mathcal{A} \rightarrow \mathcal{A}$ be a $(g_1, h_1, g_2, h_2)$-derivation. Using (3.3), we calculate $f^{2}$ and $f^{3}$. First, we calculate $f^{2}$, directly. Let $a, b$ be two arbitrary elements of $\mathcal{A}$. We have
\begin{align*}
f^2(ab) & = f \left(g_1(a) h_1(b) + g_2(a) h_2(b)\right) \\ & = f\left(g_1(a) h_1(b)\right) + f\left(g_2(a) h_2(b)\right) \\ & = g_1^{2}(a) h_1^{2}(b) + g_2 g_1(a) h_2 h_1(b) + g_1 g_2(a) h_1 h_2(b) + g_2^{2}(a) h_2^{2}(b),
\end{align*}
which means that
\begin{align*}
f^2(ab) =  g_1^{2}(a) h_1^{2}(b) + g_2 g_1(a) h_2 h_1(b) + g_1 g_2(a) h_1 h_2(b) + g_2^{2}(a) h_2^{2}(b),
\end{align*}
Now, using (3.3), we calculate $f^{2}(ab)$. In view of (2.4), we have $(\mathfrak{X}_1 + \mathfrak{X}_2) \otimes \mathfrak{X}_i = \mathfrak{X}_1 \otimes \mathfrak{X}_i + \mathfrak{X}_2 \otimes \mathfrak{X}_i$, where $i \in \{2, 3\}$. Hence, we have the following expressions:

\begin{align*}
f^2(ab) & =  (\mathfrak{X}_1 + \mathfrak{X}_2)^{2} = \sum_{k = 0}^{2}\Big(_{k}^{2}\Big)\delta_{\mathfrak{X}_1 + \mathfrak{X}_2, \mathfrak{X}_2}^{2 - k}(\textbf{1}_{(a,b)}^{\otimes}) \otimes \mathfrak{X}_2^{k} \\ & = \delta_{\mathfrak{X}_1 + \mathfrak{X}_2, \mathfrak{X}_2}^{2 }(\textbf{1}_{(a,b)}^{\otimes}) \otimes \mathfrak{X}_2^{0} + 2 \delta_{\mathfrak{X}_1 + \mathfrak{X}_2, \mathfrak{X}_2}(\textbf{1}_{(a,b)}^{\otimes}) \otimes \mathfrak{X}_2^{1} + \delta_{\mathfrak{X}_1 + \mathfrak{X}_2, \mathfrak{X}_2}^{0}(\textbf{1}_{(a,b)}^{\otimes}) \otimes \mathfrak{X}_2^{2} \\ & = \delta_{\mathfrak{X}_1 + \mathfrak{X}_2, \mathfrak{X}_2}\left((\mathfrak{X}_1 + \mathfrak{X}_2) \otimes \textbf{1}_{(a,b)}^{\otimes} - \textbf{1}_{(a,b)}^{\otimes} \otimes \mathfrak{X}_2 \right) + \\ & 2 \left((\mathfrak{X}_1 + \mathfrak{X}_2) \otimes \textbf{1}_{(a,b)}^{\otimes} - \textbf{1}_{(a,b)}^{\otimes} \otimes \mathfrak{X}_2 \right) \otimes \mathfrak{X}_2 + \textbf{1}_{(a,b)}^{\otimes} \otimes \mathfrak{X}_2^{2} \\ & = (\mathfrak{X}_1 + \mathfrak{X}_2) \otimes \mathfrak{X}_1 - \mathfrak{X}_1 \otimes \mathfrak{X}_2 + 2 \mathfrak{X}_1 \otimes \mathfrak{X}_2 + \mathfrak{X}_2^{2} \\ & = \mathfrak{X}_1^{2} + \mathfrak{X}_1 \otimes \mathfrak{X}_2 + \mathfrak{X}_2 \otimes \mathfrak{X}_1 + \mathfrak{X}_2^{2} \\ & = g_1^{2}(a) h_1^{2}(b) + g_1 g_2(a) h_1 h_2(b) + g_2 g_1(a) h_2 h_1(b)  + g_2^{2}(a) h_2^{2}(b)
\end{align*}

It is observed that
\begin{align*}
f^{2}(ab) = (\mathfrak{X}_1 + \mathfrak{X}_2)^{2}.
\end{align*}
Using (3.3), the interested reader can calculate $f^{3}(ab)$ and also $f^{n}(ab)$ for any $n \in \mathbb{N}$.
\end{example}

\begin{remark} Using Theorem \ref{4}, we can obtain Leibniz's rule for $(\sigma, \tau$)-derivations, generalized $(\sigma, \tau$)-derivations, ternary derivations and so on without assuming any restrictive condition.
\end{remark}


\bigskip
\noindent

\end{document}